\documentclass[12pt]{amsart}
\usepackage{color}
\usepackage{amsmath}
\usepackage[colorlinks=true]{hyperref}

\usepackage{amscd,amsthm,amsfonts,amssymb,esint}
\usepackage{fullpage}
\usepackage[all]{xy}
\usepackage{mathtools}
\usepackage{xcolor}
\usepackage{mathrsfs}
\usepackage{amsmath}
\usepackage[all]{xy}
\usepackage{geometry}
\usepackage{array}
\usepackage{enumerate}
\usepackage{tikz-cd}
\usepackage{comment}
\usepackage{setspace}

\keywords{dynamical Mordell--Lang conjecture; split self-map; local height}
\subjclass[2020]{37P05; 37P30, 37P55}

\begin{document}
\begin{spacing}{1.25}
	%----------------------------------------

\newtheorem{theorem}{Theorem}[section]
\newtheorem{lemma}[theorem]{Lemma}
\newtheorem{proposition}[theorem]{Proposition}
\newtheorem{corollary}[theorem]{Corollary}
\newtheorem{conjecture}[theorem]{Conjecture}
\newtheorem{conv}[theorem]{Convention}
\newtheorem{ques}[theorem]{Question} 
\newtheorem{prb}[theorem]{Problem}

\newtheorem*{DML}{Dynamical Mordell--Lang Conjecture (DML Conjecture)}

	%----------------------------------------
	\theoremstyle{definition}
	\newtheorem{definition}[theorem]{Definition}
	\newtheorem{example}[theorem]{Example}
	
	%----------------------------------------
	\theoremstyle{remark}
	\newtheorem{remark}[theorem]{Remark}
	
	%----------------------------------------
	\numberwithin{equation}{section}

	%----------------------------------------

	\title{Dynamical Mordell--Lang conjecture for split self-maps of affine curve times projective curve}
	\author{Junyi Xie}
	\address{Beijing International Center for Mathematical Research \\ Peking University \\ Beijing 100871 \\ China}
	\email{xiejunyi@bicmr.pku.edu.cn}
	\author{She Yang}
	\address{Beijing International Center for Mathematical Research \\ Peking University \\ Beijing 100871 \\ China}
	\email{ys-yx@pku.edu.cn}
	\author{Aoyang Zheng}
	\address{Beijing International Center for Mathematical Research \\ Peking University \\ Beijing 100871 \\ China}
	\email{zay@pku.edu.cn}

	%----------------------------------------
	\begin{abstract}
We prove the dynamical Mordell--Lang conjecture for product of endomorphisms of an affine curve and a projective curve over $\overline{\mathbb{Q}}$.
	\end{abstract}

	%----------------------------------------
	\maketitle
	\setcounter{tocdepth}{1}
%	\tableofcontents
	
\section{Introduction}\label{Sec_intro}

The dynamical Mordell--Lang conjecture is one of the core problems in the field of arithmetic dynamics. It was proposed by Ghioca and Tucker in \cite{GT09} and can be stated as follows:

\begin{DML}
    Let $f:X\to X$ be an endomorphism of a quasi-projective variety over a field $K$ of characteristic 0, and $V$ be a closed subvariety of $X$. Then for every $x\in X(K)$, the return set $\{ n\in \mathbb{N}|\  f^n(x) \in V(K)\}$ is a finite union of arithmetic progressions.
\end{DML}

There is an extensive literature on various cases of the DML conjecture. Two significant cases are as follows:

\begin{enumerate}
    \item If $X$ is a quasi-projective variety over $\mathbb{C}$, and $f$ is an \'etale endomorphism of $X$, then the DML conjecture holds for $(X,f)$. See \cite{Bel06} and \cite[Theorem 1.3]{BGT10}.
    \item If $X = \mathbb{A}^2_{\mathbb{C}}$, and $f$ is an endomorphism of $X$, then the DML conjecture holds for $(X,f)$. See \cite{Xie17} and \cite[Theorem 3.2]{Xie}.
\end{enumerate}

One can consult \cite{BGT16,Xie} and the references therein for further known results.

In this article, we investigate the DML conjecture for certain types of split endomorphisms.

\begin{theorem}\label{mainthm}
    Let $X$ be an affine curve and $Y$ be a projective curve over $\overline{\mathbb{Q}}$. Let $f:X\to X$ and $g:Y\to Y$ be endomorphisms. Then DML conjecture holds for $(X\times Y, f\times g)$.
\end{theorem}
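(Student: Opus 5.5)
We first reduce to the essential case. Replacing $X$ and $Y$ by their normalizations and passing to irreducible components and iterates, we may assume $X$ and $Y$ are smooth irreducible, and that $V\subset X\times Y$ is an irreducible curve. If $V$ is a point, a fiber $\{a\}\times Y$, or a fiber $X\times\{b\}$, the return set is an intersection of return sets of one-dimensional systems. These are arithmetic progressions up to finitely many terms, since orbits on curves are either preperiodic or infinite with finite intersection with any point. The same reasoning lets us assume $x=(x_1,y_1)$ has infinite orbits in both coordinates, since a preperiodic coordinate reduces to a DML problem on a curve. We may also assume $\deg f\ge 2$ and $\deg g\ge2$. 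If $f$ or $g$ is an automorphism (more generally étale), then $f\times g$ is étale after passing to an invariant subvariety, and the result follows from \cite{Bel06,BGT10}. For $Y$ of genus $\ge1$, $g$ is étale up to preperiodicity, and the étale case applies, possibly combined with the structure of $X$. So the main case is $Y=\mathbb{P}^1$ with $\deg g\ge 2$, and $X$ an affine curve admitting a non-invertible endomorphism. The latter forces $X\simeq\mathbb{A}^1$ or $\mathbb{G}_m$ after normalization, with $f$ a polynomial or a monomial map $z^{\pm d}$ times a constant.

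For the main case we use a height/adelic argument. Choose a number field $K$ and a finite set of places $S$ over which everything is defined. The coordinate $x_1$ has an infinite orbit on the affine curve $X$, so $f^n(x_1)$ escapes to the boundary point(s) at infinity at some place $v\in S$. By Siegel-type integrality, the $S$-integrality of the orbit forces the local height $\lambda_{\infty,v}(f^n(x_1))$ to grow like $c\,(\deg f)^n$ at some place $v$. On $V$, the projection $V\to X$ is finite. Near the points of $\overline V$ over $\infty_X$, we can parametrize the branches by Puiseux series $y=\phi(z)$ in a local coordinate at $\infty$. This gives $\lambda_{v}\bigl(g^n(y_1),\phi(f^n x_1)\bigr)\gtrsim c'(\deg f)^n$ whenever $f^n(x)\in V$ for large $n$. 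In other words, $g^n(y_1)$ must be $v$-adically extremely close to a specific point, or branch value, of $\overline V$ over infinity.

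Next we compare growth rates. The canonical height $\hat h_g(g^n y_1)=(\deg g)^n\hat h_g(y_1)$ bounds how close $g^n(y_1)$ can approach a fixed algebraic point $\beta$. A dynamical Roth/Siegel type estimate gives $\lambda_v(g^n y_1,\beta)=o((\deg g)^n)$ unless $\beta$ is exceptional or periodic. The closeness also persists along branches, which forces a functional relation. If $\deg f>\deg g$, we get finiteness directly. If $\deg f\le\deg g$, we use the branch structure at infinity more carefully. The Puiseux parametrization conjugates $f$ near infinity to a Böttcher coordinate $z\mapsto z^{d}$. The condition that infinitely many return times exist forces the germ of $V$ at infinity to be invariant under $f\times g$ up to iterate, by a $p$-adic/archimedean analytic interpolation argument as in \cite{Xie17}. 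Then $V$ is periodic, and the return set is an arithmetic progression.

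The main obstacle is this last step: when $\deg f\le \deg g$, we must show that infinitely many returns force $V$ to be $(f\times g)$-periodic. I would attempt it by following the strategy of \cite[Theorem 3.2]{Xie}. At the place $v$ where $f^n(x_1)\to\infty$, we construct an invariant analytic curve through $(\infty,\beta)$ via Böttcher coordinates for $f$ and a local linearization/Böttcher coordinate for $g$ at the (necessarily periodic superattracting or exceptional) limit point $\beta$. We then show that $V$ coincides with this curve by comparing valuative data, namely the intersection multiplicities of the iterates $(f\times g)^{-n}(V)$ along the branch at infinity.
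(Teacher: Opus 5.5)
You have the paper's core mechanism: $f^n(x_1)$ escapes at a place $v$ with $\log|f^n(x_1)|_v\asymp(\deg f)^n$, which forces $g^n(y_1)$ to be $v$-adically that close to a point $\beta$ of $\overline V$ over $\infty$, contradicting Silverman's estimate. But as written, your argument only closes the case $\deg f>\deg g$, and implicitly $\deg f=\deg g$ with $\beta$ neither exceptional nor periodic. Everything else goes to the step you call the main obstacle: an invariant analytic germ at $(\infty,\beta)$ built from B\"ottcher coordinates, compared with $V$ via intersection multiplicities of $(f\times g)^{-n}(V)$. That is only a description of a strategy, in effect re-deriving part of \cite{Xie17}, and it is not carried out. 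So the proof has a genuine gap there.

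The detour is also unnecessary, because the right case split removes the obstacle:
\begin{enumerate}
\item[(i)] If $\deg f\neq\deg g$, compare global heights. Both projections of the normalization of $\overline V$ are finite, so $h(x)$ and $h(y)$ are bounded linearly by each other on $V$. Meanwhile $h(f^n(x_1))$ and $h(g^n(y_1))$ grow like $(\deg f)^n$ and $(\deg g)^n$ respectively (at most polynomially if a degree is $1$). Hence there are only finitely many returns; this is what the paper takes from \cite[Corollary 1.9]{XY}. Your local estimate at $v$ cannot do this when $\deg f<\deg g$, since $c(\deg f)^n$ is compatible with $o((\deg g)^n)$.
\item[(ii)] If $\deg f=\deg g$ and $g$ has no exceptional point, Lemma \ref{Sil} applies to every $\beta$, periodic or not, because Silverman's theorem only requires $\beta$ to be non-exceptional. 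It gives finiteness outright, so the configuration ``$V$ periodic'' you aim for never arises there.
\item[(iii)] If $g$ has an exceptional point, then $g^2$ is conjugate to a polynomial. The orbit of the non-preperiodic $y_1$ avoids that point, so you are reduced to a polynomial endomorphism of $\mathbb{A}^2$, where \cite{Xie17} applies as a black box.
\end{enumerate}

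Your \'etale reductions are also wrong. For dominant maps of smooth curves, $f\times g$ is \'etale only if both $f$ and $g$ are. ``Passing to an invariant subvariety'' does not help, since an orbit with both coordinates non-preperiodic is typically Zariski dense. So \cite{BGT10} does not cover $f$ an automorphism with $\deg g\ge 2$; that case is settled by the height comparison in (i). More seriously, it does not cover $Y$ of genus $1$ with $X\cong\mathbb{A}^1$ and $f$ a polynomial of degree $\ge 2$, which is ramified in $\mathbb{A}^1$. (For $X\cong\mathbb{G}_m$, $f=cz^{\pm d}$ is \'etale, so that case is fine.) When $\deg f=\deg g$ there, for example $g=[2]$ on an elliptic curve and $f$ a quartic polynomial, neither heights nor \'etaleness suffice. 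The paper handles this case with \cite[Corollary 1.2]{BZ23}. Alternatively, your local argument at $v$ goes through if you replace Lemma \ref{Sil} by its elliptic-curve counterpart, the Siegel--Roth bound $\lambda_v(P,\beta)=o(\hat h(P))$ for $P\in Y(K)$.
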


It turns out that major case is about $\mathbb{A}^1\times\mathbb{P}^1$. Hence we state this as a proposition and will mainly deal with it later.

\begin{proposition}\label{mainprop}
    Let $f: \mathbb{A}^1 \to \mathbb{A}^1$ and $g: \mathbb{P}^1 \to \mathbb{P}^1$ be endomorphisms over $\overline{\mathbb{Q}}$. Then DML conjecture holds for $(\mathbb{A}^1 \times \mathbb{P}^1, f \times g)$.
\end{proposition}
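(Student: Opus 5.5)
We prove Proposition \ref{mainprop} for a point $x=(a,b)\in \mathbb{A}^1\times\mathbb{P}^1(\overline{\mathbb{Q}})$ and an irreducible curve $C\subset \mathbb{A}^1\times\mathbb{P}^1$; the target is that $N=\{n: (f\times g)^n(x)\in C\}$ is a finite union of arithmetic progressions. Proper subvarieties are finite unions of curves and points, and points (and fibres $\{p\}\times\mathbb{P}^1$, $\mathbb{A}^1\times\{q\}$) reduce to the one-dimensional DML, which is trivial. So we may assume $C$ is an irreducible curve projecting dominantly to both factors.

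\textbf{Easy cases.} If $\deg f\le 1$ and $\deg g\le 1$, the map is an automorphism, hence \'etale, and we conclude by \cite{Bel06,BGT10}. If $a$ is $f$-preperiodic, replace $x$ by an iterate and $f$ by an iterate so that $a$ is fixed. Then $N$ is the return set of $b$ under $g$ into the finite set $C\cap(\{a\}\times\mathbb{P}^1)$, which is again a one-dimensional problem; the case where $b$ is $g$-preperiodic is symmetric. Hence assume both orbits are infinite, and that at least one of $\deg f$, $\deg g$ is at least $2$.

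\textbf{Main case, via heights.} Suppose $N$ is infinite. For $n\in N$ the points $(f^n(a),g^n(b))$ lie on $C$, and on $C$ the Weil heights satisfy
\[
h(g^n(b))\cdot\deg_{\mathbb{A}^1}(C)=h(f^n(a))\cdot\deg_{\mathbb{P}^1}(C)+O\bigl(\sqrt{h}\bigr).
\]
Canonical heights grow like $(\deg f)^n$ and $(\deg g)^n$. If $\deg f\ne\deg g$, this forces $N$ to be finite, except in the subcase $\deg f=1$ with $\deg g\ge2$ (or the reverse), which we treat separately.

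\begin{itemize}
\item \emph{One degree is $1$.} Here $f$ is linear. A translation, or $\lambda z$ with $\lambda$ not a root of unity, has orbit of height growing linearly in $n$, whereas $h(g^n(b))$ grows exponentially. Comparing the two sides of the height relation on $C$ again gives finiteness.
\item \emph{$\deg f=\deg g=d\ge2$.} This is the key case, and we use that $X=\mathbb{A}^1$ is affine. Choose a place $v$ (archimedean, or one where $a$ escapes) such that $f^n(a)\to\infty$ $v$-adically. Near the point(s) of $\overline{C}$ at infinity over $\infty\in\mathbb{P}^1$ (the first factor), $C$ is given by Puiseux branches $y=\phi(1/z)$ landing on a point $q\in\mathbb{P}^1$. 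Hence $g^n(b)$ is $v$-adically close to $q$ for $n\in N$. Conjugating $f$ near $\infty$ by the B\"ottcher coordinate, $f^n(a)\approx \beta(a)^{d^n}$, so the local height is $\lambda_v(f^n(a))=d^n G_f(a)$. Comparing with the Green function $G_g$ near $q$ identifies $q$ as a point where $g^n(b)$ approaches with a matching rate, which forces $q$ to be a $g$-periodic superattracting point of local degree compatible with $d$.
\end{itemize}

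We then follow Xie's strategy for $\mathbb{A}^2$ \cite{Xie17}. Using B\"ottcher coordinates on both sides, $C$ near $(\infty,q)$ becomes an analytic relation $\Phi(u,w)=0$ invariant under the power maps. This shows that $C$ is, after iterates, periodic under $f\times g$ (it is a branch of a ``graph of a B\"ottcher conjugacy''), or else the return set is finite. If $C$ is periodic of period $k$, then $N$ is a union of progressions mod $k$ plus finitely many exceptions, using that $(f\times g)^k|_C$ gives a one-dimensional DML problem.

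\textbf{Expected obstacle.} The hardest step is making the local analysis rigorous when $q$ is not superattracting or the branch data are non-generic. The place $v$ must be chosen so that $b$ is also in the right basin, and we must exclude infinitely many returns accumulating through repelling or indifferent behaviour. I would first try the $p$-adic interpolation approach on a good prime for the non-escaping factor, combined with the escaping place for $f$, to show the relevant return sequence is eventually determined by finitely many branches.
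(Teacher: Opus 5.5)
Your reductions are sound. The fibre and preperiodic cases, the automorphism case, and the height comparison on $C$ for $\deg f\neq\deg g$ (including the case where one degree is $1$) together replace the paper's appeal to \cite{BGT10} and \cite[Corollary 1.9]{XY}. In the case $\deg f=\deg g=d\ge 2$ you also isolate the same configuration as the paper. At a place $v$ with $\log|f^n(a)|_v\asymp d^n$, the branches of $\overline C$ over $x=\infty$ give a point $q$ with $-\log|g^{n}(b)-q|_v\ge c\,d^{n}$ for infinitely many $n\in N$. Such a place exists by Northcott, as in Lemma \ref{norm growth}; an archimedean place need not work.

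\textbf{The gap is the next step.} The claim that this rate ``forces $q$ to be a $g$-periodic superattracting point'' cannot come from comparing Green functions at the single place $v$, because locally nothing prevents such approach. Over $\mathbb{C}$, a Baire-category argument gives non-preperiodic $b\in J(g)$ whose orbit approaches a prescribed $q\in J(g)$ at speed $e^{-cd^n}$ along a subsequence, and points of $J(g)$ are never superattracting. What excludes this is arithmetic, namely Silverman's theorem \cite[Theorem E]{Sil93} (Lemma \ref{Sil}). It says that for algebraic non-preperiodic $b$ and non-exceptional $q$, $-\log\min\{|g^n(b)-q|_v,1\}=o(d^n)$. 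Its proof pulls the approximation back to points of $g^{-m}(q)$, uses that the ramification of $g^m$ over a non-exceptional point is $o(d^m)$, and applies Roth's theorem against $h(g^{n-m}(b))\approx d^{n-m}\hat h_g(b)$. This global Diophantine input is the missing idea. Your ``expected obstacle'' is exactly the case it settles, and $p$-adic interpolation is not what is needed there.

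\textbf{The right dichotomy is exceptional versus non-exceptional, not superattracting versus not.} A superattracting cycle of period $k$ on which $g^k$ has local degree $e<d^k$ only gives $-\log|g^n(b)-q|_v\asymp e^{n/k}=o(d^n)$. Local degree $d^k$ along the cycle means precisely that $q$ is exceptional. In that exceptional case your B\"ottcher argument is also unjustified. You assert that an analytic germ $\Phi(u,w)=0$ through infinitely many points $(u_0^{d^n},w_0^{d^n})$ must be invariant under the power maps. That is unproved and is essentially the whole difficulty of \cite{Xie17}.

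You do not need it. If $q$ is exceptional, replace $g$ by $g^2$ and move $q$ to $\infty$; then $g$ is a polynomial, and the orbit of the non-preperiodic $b$ stays in $\mathbb{A}^1$. The problem is then literally DML for a polynomial endomorphism of $\mathbb{A}^2$, which is \cite{Xie17}. The paper does exactly this up front: it disposes of the case where some iterate of $g$ is conjugate to a polynomial via \cite{Xie17}, so $g$ has no exceptional points. Lemma \ref{Sil} applied to $q=b_1$ then contradicts $-\log|g^{n_l}(y_0)-b_1|_v>c\,d^{n_l}$, so $C$ meets the orbit in only finitely many points.
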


In the proof of Proposition \ref{mainprop}, we first apply the results in \cite{BGT10,Xie17} and \cite{XY} to make some reductions and further assumptions about the endomorphisms. Namely, we may assume that $\mathrm{deg}(f)=\mathrm{deg}(g)>1$ and no iterate of $g$ can conjugate to a polynomial. Then the key observation is that for an appropriate place, the $\mathbb{A}^1$ coordinate of a non-preperiodic orbit tends to infinity with the maximal speed, while the $\mathbb{P}^1$ coordinate of any subsequence of that orbit cannot tend to any point with such a speed. This forces that orbit to have a finite intersection with every (non-horizontal and non-vertical) curve.

We can deduce from Theorem \ref{mainthm} that the DML conjecture holds in more general settings.

We recall a concept following \cite[Definition 1.3]{Xie}. For a quasi-projective variety $X$ over a field $K$ and an endomorphism $f$, we say $(X, f)$ satisfies the \emph{DML(1) property}, if for any curve $C\subseteq X$ and any point $x\in X(K)$, the return set $\{ n\in \mathbb{N}|\ f^n(x)\in C(K)\}$ is a finite union of arithmetic progressions. Here ``1" stands for the dimension of the closed subvariety.

\begin{corollary}\label{cor2}
    Let $X$ be an affine variety and $Y$ be a projective variety over $\overline{\mathbb{Q}}$. Let $f:X \to X$ and $g:Y\to Y$ be dominant endomorphisms. Assume that $(X, f)$ and $(Y, g)$ satisfy the DML(1) property. Then $(X\times Y, f\times g)$ satisfies the DML(1) property.
\end{corollary}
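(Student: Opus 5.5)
Fix a curve $C\subseteq X\times Y$ and a point $(x,y)\in (X\times Y)(\overline{\mathbb{Q}})$, and let $\pi_1,\pi_2$ be the two projections. The idea is to reduce to Theorem \ref{mainthm} by projecting the orbit and $C$ to each factor. We may assume $C$ is irreducible; otherwise we treat each component separately and take the union of the return sets, which is still a finite union of arithmetic progressions. We may also assume the return set $N=\{n\mid (f\times g)^n(x,y)\in C\}$ is infinite.

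\textbf{Case 1: $C$ is contracted by a projection.} Suppose $\pi_1(C)$ is a point $p$. Then $C=\{p\}\times C'$ for a curve $C'\subseteq Y$, and
\[
N=\{n\mid f^n(x)=p\}\cap\{n\mid g^n(y)\in C'\}.
\]
The first set is a finite union of arithmetic progressions by the DML(1) property of $(X,f)$ applied to any curve through $p$ (or trivially, since it is either finite or periodic). The second set is a finite union of arithmetic progressions by the DML(1) property of $(Y,g)$. An intersection of two such sets is again of this form. The case where $\pi_2(C)$ is a point is symmetric.

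\textbf{Case 2: both projections of $C$ are curves.} Let $D_1=\overline{\pi_1(C)}\subseteq X$ and $D_2=\overline{\pi_2(C)}\subseteq Y$. Then $N\subseteq N_1\cap N_2$, where $N_i$ is the return set of the $i$-th coordinate orbit to $D_i$, and by DML(1) each $N_i$ is a finite union of arithmetic progressions. Since $N$ is infinite, there is a progression $a+b\mathbb{N}$ in $N_1\cap N_2$ that meets $N$ infinitely often, and we may work on each such progression separately.

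Replace $f,g$ by $f^b,g^b$ and $x,y$ by $f^a(x),g^a(y)$. Now every iterate $f^{bn+a}(x)$ lies in $D_1$. Take the Zariski closure of the orbit of $f^a(x)$; it is an $f^b$-invariant closed subset. If it is finite, the first coordinate is preperiodic and we are back in Case 1 after passing to a further progression. Otherwise it is a union of irreducible curves permuted by $f^b$. Passing to a further iterate, some irreducible component $D_1'\subseteq D_1$ is $f^{bk}$-invariant and contains the relevant orbit. Likewise we obtain an invariant curve $D_2'\subseteq D_2$.

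$D_1'$ is a closed curve in the affine variety $X$, hence affine; $D_2'$ is closed in the projective variety $Y$, hence projective. Pass to normalizations $\widetilde{D_1'}\to D_1'$ and $\widetilde{D_2'}\to D_2'$, which are again affine and projective respectively. The endomorphisms lift to the normalizations, and the orbit points lift as well once we discard the finitely many singular points, which an infinite non-preperiodic orbit hits only finitely often. Pull $C$ back to a curve in $\widetilde{D_1'}\times\widetilde{D_2'}$. Theorem \ref{mainthm}, applied to the lifted endomorphisms on $\widetilde{D_1'}\times\widetilde{D_2'}$, then shows that the return set along this progression is a finite union of arithmetic progressions. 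Reassembling the finitely many progressions gives the claim for $N$.

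\textbf{Main obstacle.} The delicate step is the bookkeeping in Case 2: ensuring that the orbit closure produces an invariant \emph{curve} that actually contains the orbit points after shifting. This rests on the fact that, when $N_1$ contains a progression, the orbit lies in $D_1$ along that progression, so its Zariski closure is a union of curves inside $D_1$, possibly together with finitely many points. This step also requires that the lifting to normalizations be compatible with the dynamics.

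The dominance hypothesis is not needed for this argument, but it is harmless. A non-dominant $f$ has image a proper closed subset. On a curve factor the image is then a point, and that situation is covered by the preperiodic case.
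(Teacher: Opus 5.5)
Your proof is correct and is essentially the paper's argument. You project $C$ to the two factors, use the DML(1) property of $(X,f)$ and $(Y,g)$ to get progressions along which the coordinate orbits lie in the projected curves (which are then periodic, hence invariant after iteration), and apply Theorem~\ref{mainthm} to the restricted split map on an affine curve times a projective curve. You are more explicit than the paper about bookkeeping it leaves implicit: the contracted and preperiodic cases, and treating each progression of $N_1\cap N_2$ separately. Two small points: the normalization step is redundant, since the proof of Theorem~\ref{mainthm} already normalizes; and in Case~1 only your parenthetical reason is valid (a return set to a single point is trivially finite or a progression), not the appeal to DML(1) for a curve through $p$.
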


\subsection*{Acknowledgements}
We thank the anonymous referee for some beneficial suggestions.

This work is supported by the National Natural Science Foundation of China Grant No. 12271007.

\section{Proof of the main results}

Firstly, we recall a lemma which plays a key role in the proof. It guarantees that for a rational function which has no exceptional points, any subsequence of a non-preperiodic orbit cannot tend to any point with the maximal speed. It was proved in \cite[Theorem E]{Sil93}. See also \cite[Theorem 1.11]{Mat23} and \cite[Theorem 1.8]{Mat25} for some generalizations.

\begin{lemma}\label{Sil}
Let $f:\mathbb{P}^1 \to \mathbb{P}^1$ be an endomorphism of degree $d$ over a number field K, and $p\in \mathbb{P}^1(K)$ be a non-exceptional point. Fix a coordinate of $\mathbb{P}^1$ such that $p$ is not the infinity. Let $v\in M_{K}$ be a place. Then for any non-preperiodic $x\in \mathbb{P}^1(K)$, we have
    $$
        \lim_{n \to +\infty} \frac{-\log \min \{|f^n(x)-p|_{v}, 1\}}{d^n} = 0.
    $$
\end{lemma}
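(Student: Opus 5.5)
We deduce Lemma \ref{Sil} from Silverman's integrality theorem, i.e. \cite[Theorem E]{Sil93}. We work with the $v$-adic chordal distance $\delta_v$ on $\mathbb{P}^1(\mathbb{C}_v)$ and the associated local height $\lambda_{p,v}(y)=-\log\delta_v(y,p)$. Since $p$ is not at infinity, $\lambda_{p,v}$ and $-\log\min\{|y-p|_v,1\}$ differ by a bounded function. So it suffices to show
$$\lambda_{p,v}(f^n(x))=o(d^n).$$

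First we may assume $d\ge 2$. For $d=1$ there is no maximal-speed phenomenon to rule out: $f$ is a M\"obius map, and the claimed limit is the classical statement that a non-preperiodic orbit of an automorphism approaches $p$ at most polynomially or exponentially with rate independent of $d^n$. This case can be handled separately by linear forms in logarithms or by treating it as trivial if the paper's conventions exclude it. I would note it and focus on $d\ge2$.

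The key input is Silverman's theorem. Let $p$ be non-exceptional, meaning its backward orbit is infinite, and let $x$ be non-preperiodic, so that $\hat h_f(x)>0$ when $d\ge 2$. Then
$$\lim_{n\to\infty}\frac{\lambda_{p,v}(f^n(x))}{h(f^n(x))}=0.$$
The proof of this runs as follows. Choose $m$ large so that $f^{-m}(p)$ contains at least three points, which is possible since $p$ is not exceptional. If $f^{n}(x)$ is $v$-adically very close to $p$, then $f^{n-m}(x)$ is close to some point $q\in f^{-m}(p)$. The closeness is measured with exponent at most $e\le d^m$ over the ramification index, and in fact with $e<d^m$ in aggregate. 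One then applies Roth's theorem, in its form over number fields for approximation of algebraic points, to the points $f^{n-m}(x)$ approximating $q$. This gives
$$\lambda_{q,v}(y)\le(2+\epsilon)h(y)+O(1).$$
Combined with the ramification bookkeeping across the full backward orbit, this yields the ratio tending to $0$. I would simply cite this step from \cite{Sil93}.

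Next, the height grows like $d^n$: $h(f^n(x))=d^n\hat h_f(x)+O(1)$, with $\hat h_f(x)$ finite. Therefore
$$\lambda_{p,v}(f^n(x))=o\bigl(h(f^n(x))\bigr)=o(d^n),$$
and dividing by $d^n$ gives the limit $0$.

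The main obstacle is the Roth-type step with ramification. A direct bound only gives $\lambda\le C\,d^n$ with some constant $C$, and reaching $o(d^n)$ requires pulling back $m$ steps so that ramification is diluted. Since this is exactly Silverman's Theorem E, I would invoke it rather than reprove it.
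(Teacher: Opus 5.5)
Your argument for $d\ge 2$ is the paper's. The paper gives no proof beyond citing Silverman's Theorem E \cite{Sil93}. Your steps are the routine unpacking of that citation: you pass to the chordal local height $\lambda_{p,v}$, which does differ from $-\log\min\{|y-p|_v,1\}$ by $O(1)$ when $p\neq\infty$, and then you use $h(f^n(x))=d^n\hat h_f(x)+O(1)$.

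One correction to your sketch of Silverman's proof. What is actually used is $\max_{q\in f^{-m}(p)}e_{f^m}(q)/d^m\to 0$ as $m\to\infty$, not merely $e<d^m$. This is combined with the fact that Roth's exponent $2+\epsilon$ does not depend on $[K(q):K]$. A Liouville bound would carry a factor of order $[K(q):K]$, and since $e_{f^m}(q)\,[K(q):K]\le d^m$, that factor would wipe out the gain.

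The real error is your remark on $d=1$. There the statement is not a classical fact; it is false. Since $d^n=1$, the lemma would force $-\log\min\{|f^n(x)-p|_v,1\}$ itself to tend to $0$. For example, take $f(z)=z+1$ over $\mathbb{Q}$ and $p=x=0$. Here $p$ is non-exceptional, since its backward orbit $\{0,-1,-2,\dots\}$ is infinite, and $x$ is not preperiodic. At the $2$-adic place, along $n=2^k$ one gets $-\log\min\{|2^k|_2,1\}=k\log 2$, which is unbounded. Linear forms in logarithms can only bound such quantities by something like $O(\log n)$, for instance for an irrational rotation at an archimedean place, never by $o(1)$. So the lemma must be read with $d\ge 2$. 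That is Silverman's hypothesis and the only case the paper uses, since there $\deg g=d>1$.
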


Now we can prove Proposition \ref{mainprop}.

\proof[Proof of Proposition \ref{mainprop}]
    First we make some assumptions and reductions.

    In view of \cite{BGT10} and \cite[Corollary 1.9]{XY}, we only need to treat the case when $\deg f = \deg g = d > 1$.
    
    If $g$ is conjugated to a polynomial map, then we reduce to the case of endomorphisms of $\mathbb{A}^2$ \cite{Xie17}. Since the DML property is invariant under iteration, we may assume that any iteration of $g$ is not conjugated to a polynomial map. Let $(x_0,y_0)$ be the starting point and let $C\subseteq\mathbb{A}^1\times\mathbb{P}^1$ is an irreducible curve. In order to prove the DML conjecture, we may assume that $x_0$ and $y_0$ are neither preperiodic points for $f$ and $g$, and that $C$ is not a fiber of $\mathbb{A}^1$ or $\mathbb{P}^1$. In this case, we prove that $C(\overline{\mathbb{Q}}) \cap O_{f \times g}(x_0,y_0)$ is finite.
    
    Extend $f$ to $\infty$. Let $\overline{C}$ be the closure of $C$ in $\mathbb{P}^1\times\mathbb{P}^1$ and $l_\infty = \{\infty\} \times \mathbb{P}^1$. Let $b_1, \dots, b_k$ be the second factor of the intersection points in $\overline{C} \cap l_\infty$. Applying a suitable conjugation by an element in $\mathrm{Aut}(\mathbb{P}^1)$ on the second $\mathbb{P}^1$-factor, we may assume that $\infty\notin\{b_1,\dots,b_k\}$. Let $K$ be a number field so that all data above are defined over $K$.

\begin{lemma}\label{norm growth}
    If $x_0 \in \mathbb{A}^1(K)$ is not a preperiodic point of f, then there exist a place $v \in M_K$, constants $c_1,c_2>0$, and a positive integer $N$, such that for $n>N$, we have
    $$ c_1 d^n < \log |f^n(x_0)|_v < c_2 d^n. $$
\end{lemma}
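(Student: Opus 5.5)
The plan is to use the canonical (Call--Silverman) local and global heights attached to the polynomial $f$ of degree $d>1$. For a polynomial $f\in K[z]$ and each place $v\in M_K$, the $v$-adic Green function
$$G_v(z)=\lim_{n\to\infty}\frac{1}{d^n}\log\max\{|f^n(z)|_v,1\}$$
exists, is continuous, and satisfies $G_v(f(z))=d\,G_v(z)$. Moreover $G_v(z)=\log|z|_v+O_v(1)$ when $|z|_v$ is large. The canonical height decomposes as $\hat h_f(x)=\frac{1}{[K:\mathbb{Q}]}\sum_v n_v G_v(x)$, and $\hat h_f(x)=0$ exactly when $x$ is preperiodic (Northcott). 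So for non-preperiodic $x_0$ we get $\hat h_f(x_0)>0$, and there is a place $v$ with $G_v(x_0)>0$. This fixes the place $v$.

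Next I use the escape region. Choose $R_v>1$ such that for $|z|_v>R_v$ we have $\bigl|\log|f(z)|_v-d\log|z|_v\bigr|\le C_v$, with $C_v=\log|a_d|_v^{\pm}$ plus a bounded error coming from the lower-order coefficients. After enlarging $R_v$ so that $|f(z)|_v>|z|_v$ on this region, $\{|z|_v>R_v\}$ is forward invariant. On it, a telescoping argument gives $\bigl|G_v(z)-\log|z|_v\bigr|\le C_v/(d-1)=:C$. Every point with $G_v(z)>0$ eventually enters the region, since $G_v$ is bounded on $\{|z|_v\le R_v\}$ whenever $G_v$ is bounded there; more precisely, if $f^n(z)$ stayed in the disc for all $n$, then $G_v(z)=0$. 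Alternatively, one can simply note that $G_v(f^n(x_0))=d^nG_v(x_0)\to\infty$, while $G_v$ is bounded on the disc, so $f^n(x_0)$ lies outside the disc for all $n>N$.

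Finally I combine the two facts. For $n>N$,
$$\log|f^n(x_0)|_v = G_v(f^n(x_0))+O(1)=d^nG_v(x_0)+O(1).$$
Taking $c_1=G_v(x_0)/2$ and $c_2=2G_v(x_0)$, and enlarging $N$ so that the $O(1)$ term is absorbed, gives the claimed inequalities.

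The main obstacle is the first step: showing that some place has $G_v(x_0)>0$. This needs the standard facts that $\hat h_f$ is the sum of the local Green functions and vanishes only at preperiodic points. I would cite Call--Silverman for these, or argue directly: if $G_v(x_0)=0$ for all $v$, then $|f^n(x_0)|_v$ stays bounded at every place, uniformly in $n$ and equal to $1$ for all but finitely many $v$. The orbit then has bounded Weil height and lies in $K$, so by Northcott it is finite, contradicting the assumption that $x_0$ is not preperiodic.
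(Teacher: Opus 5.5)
Your proof is correct, but it reaches the result by a different, more conceptual route than the paper.

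\textbf{How the paper does it.} The paper never introduces Green functions. It fixes the finite set $S$ consisting of the archimedean places, the places where $|a_d|_v\neq 1$, and the places where some $|a_i|_v>1$. It then uses explicit escape radii: $1$ for $v\notin S$, and $C_v=\frac{2}{|a_d|_v}(1+\sum_{i<d}|a_i|_v)+1$ for $v\in S$. If the orbit never exceeded these radii, its naive Weil height would be bounded, contradicting Northcott. So some iterate escapes at some place. There, the ratio $|f^{n+1}(x_0)|_v/|f^n(x_0)|_v^d$ either equals $1$ (good place) or lies between $\frac12|a_d|_v$ and $\frac32|a_d|_v$. Summing the resulting logarithmic inequalities gives the two bounds directly.

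\textbf{How you do it.} You choose $v$ using the decomposition $\hat h_f=\frac{1}{[K:\mathbb{Q}]}\sum_v n_vG_v$ together with $\hat h_f(x_0)>0$. You then run the same telescoping, in the form $|G_v(z)-\log|z|_v|\le C_v/(d-1)$ on a forward-invariant escape region.

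\textbf{Comparison.} Your version gives a sharper, intrinsic statement, $\log|f^n(x_0)|_v=G_v(x_0)d^n+O(1)$, which identifies the growth constant as $G_v(x_0)$. The cost is reliance on Call--Silverman theory: existence and continuity of $G_v$, and the local decomposition of $\hat h_f$. The paper's proof is self-contained and only needs Northcott for the naive height. Your fallback argument in the last paragraph, where you drop to bounded Weil height and apply Northcott, is essentially the paper's argument.

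\textbf{Small slips.}
\begin{itemize}
\item The phrase ``$G_v$ is bounded on $\{|z|_v\le R_v\}$ whenever $G_v$ is bounded there'' is circular. Your ``more precisely'' clause, combined with forward invariance of the region, is the correct justification.
\item At non-archimedean places, boundedness of $G_v$ on the disc should come from the global estimate $G_v(z)\le\log\max\{|z|_v,1\}+O(1)$, not from compactness.
\item At good places the orbit satisfies $|f^n(x_0)|_v\le 1$, not $=1$.
\end{itemize}
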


\begin{proof}
    Write $f = a_d x^d + a_{d-1} x^{d-1} + \cdots + a_0$, where $a_d \neq 0$. Denote $M_{K,\infty}$ as the set of archimedean places of $K$. Let $S = M_{K,\infty}\cup\{ v\in M_K \big{|}\ |a_d|_v \neq 1\}\cup\bigcup\limits_{i=0}^{d-1}\{ v\in M_K \big{|}\ |a_i|_v > 1\}$. Note that $S$ is a finite set. For $v\in S$, we denote $C_v=\frac{2}{|a_d|_v}(1+\sum\limits_{i=0}^{d-1}|a_i|_v)+1$.
    
    Since $O_f(x_0)$ is infinite, we know $\{h(f^n(x_0)) |\ n\in \mathbb{N}\}$ is unbounded by the Northcott property. Here $h$ is the height function. Then we can find either a place $v\notin S$ together with an integer $N$ such that $|f^N(x_0)|_v > 1$, or a place $v\in S$ together with an integer $N$ such that $|f^N(x_0)|_v > C_v$.
    
    In the previous case, we have $|f^{n+1}(x_0)|_v = |f^n(x_0)|_v^d$ when $n\geq N$. Hence the lemma follows.
    
    In the latter case, the inequalities $\frac{1}{2} |a_d|_v < \frac{|f^{n+1}(x_0)|_v}{|f^n(x_0)|_v^d} < \frac{3}{2} |a_d|_v$ and $|f^n(x_0)|_v>C_v$ hold for every $n\geq N$. Taking logarithm, we get
    $$ \log(\frac{1}{2}|a_d|_v) < \log |f^{n+1}(x_0)|_v - d\log|f^n(x_0)| < \log(\frac{3}{2}|a_d|_v).$$
    
    Then for $n\geq N$, by taking summation in the standard way, we get $$(\log|f^N(x_0)|_v+\frac{\log(|a_d|_v/2)}{d-1})d^{n-N} - \frac{\log(|a_d|_v/2)}{d-1} < \log |f^n(x_0)|_v$$
    $$< (\log|f^N(x_0)|_v+\frac{\log(3|a_d|_v/2)}{d-1})d^{n-N} - \frac{\log(3|a_d|_v/2)}{d-1}.$$
    Thus we finish the proof.
\end{proof}

    Now assume that $C \cap O_{f \times g}((x_0,y_0))$ is infinite. By Lemma \ref{norm growth}, we find a place $v\in M_K$ where $|f^n(x_0)|_v \to \infty$. Let $(n_l)_{l\geq1}$ be the return set $\{n\in \mathbb{N}|\ (f^{n}(x_0), g^{n}(y_0)) \in C(K)\}$, $\varphi$ be the defining function of $C$, and $\varphi_\infty = \varphi|_{l_\infty}$. Let $x$ and $y$ indicate the standard coordinates on $\mathbb{A}^1$ and $\mathbb{P}^1$, respectively. Write $\varphi = \sum_{i=0}^{m}{x^i \sum_{j=0}^{n}{a_{ij}y^j}}$, then $\varphi_\infty = \sum_{j=0}^{n}{a_{mj}y^j}$. Then $a_{mn} \neq 0$ as we have assumed that $\overline{C}$ does not intersect $l_\infty$ at $(\infty,\infty)$.

    For a point $(x,y) \in C(K)$ such that $x\neq0$, by rearranging the terms of the defining equation $\varphi$ and by dividing both sides by $x^m$, we have
    \begin{equation}\label{eq1}
        \sum_{i=0}^{m-1}{x^{i-m} \sum_{j=0}^{n}{a_{ij}y^j}} = -\sum_{j=0}^{n}{a_{mj}y^j} = -a_{mn}\prod_{s=1}^{k}(y-b_s)^{l_s}
    \end{equation}
    where $l_1,\dots,l_k$ are the multiplicities of the roots $b_1,\dots,b_k$ in $\varphi_{\infty}$.

    Write $(x_l,y_l) = (f^{n_l}(x_0), g^{n_l}(y_0))$. We claim that $|x_l|_v \to \infty$ forces $\min\limits_{1\leq s \leq k}|y_l - b_s|_v \to 0$. Otherwise, by extracting subsequence, we can assume $|y_l-b_s|_v>\varepsilon$ for some $\varepsilon>0$, every $s$ and every $l\geq 1$. If $\{|y_l|_v\}_{l\geq 1}$ is bounded, then when $l\to\infty$, the LHS of (\ref{eq1}) tends to 0 while the RHS has a positive lower bound, which is impossible. If $\{|y_l|_v\}_{l\geq 1}$ is unbounded, by extracting subsequence, we assume $|y_l|_v\to\infty$. Divide by $y_l^n$ in the both sides of (\ref{eq1}), then when $k\to\infty$, the LHS of (\ref{eq1}) tends to 0 while the RHS tends to $a_{mn}\neq 0$, a contradiction. Therefore, we get $\min\limits_{1\leq s \leq k}|y_l - b_s|_v \to 0$.

    Passing to subsequence, we may assume $|y_l - b_1|_v \to 0$. Then there is a constant $c_0$ such that $|y_l-b_1|_v^{l_1} < \frac{c_0}{|x_l|_v}$ for $l$ sufficiently large. So we get $-\log |y_l - b_1|_v > cd^{n_l}$ for some constant $c>0$ when $l$ is large.
    
    Now we apply Lemma \ref{Sil} to get a contradiction. It only remains to verify that $b_1$ is not an exceptional point for $g$. But as we have assumed that no iterate of $g$ can conjugate to a polynomial map, in fact $g$ has no exceptional point. Otherwise, the iteration $g^2$ will have an invariant exceptional point. Applying a suitable conjugation by an element in $\mathrm{Aut}(\mathbb{P}^1)$, we may send that point to $\infty$ and then $g^2$ is conjugated to a polynomial map. Thus we finish the proof.
\endproof

Now we prove Theorem \ref{mainthm} and Corollary \ref{cor2}.

\proof[Proof of Theorem \ref{mainthm}]
    We may assume that $f$ and $g$ are dominant. By taking normalization, we assume that $X$ and $Y$ are smooth. Take a smooth projective closure $\bar{X}$ of $X$. Then we can extend $f$ to an endomorphism $\bar{f}: \bar{X}\to\bar{X}$. 

    If the genus of $\bar{X}$ is greater than $1$, then some iteration of $\bar{f}$ is the identity, and the DML conjecture holds trivially in this case. The same is true if the genus of $Y$ is greater than $1$.

    If the genus of $\bar{X}$ and $Y$ are both $1$, then $\bar{f}$ and $g$ are both étale. Hence $\bar{f}\times g$ is also étale. Then the DML conjecture holds by \cite{BGT10}.

    If the genus of $\bar{X}$ is $1$ and the genus of $Y$ is $0$, then $\bar{f}$ is étale and the DML conjecture holds by \cite[Corollary 1.2]{BZ23}. The same is true if genus of $\bar{X}$ is $0$ and the genus of $Y$ is $1$.

    If the genus of $\bar{X}$ and $Y$ are both $0$, then $Y\cong \mathbb{P}^1$ and $X\cong \mathbb{P}^1 \setminus E$, where $E$ is a non-empty finite set. Since $\bar{f}$ is surjective, we have $\bar{f}(E) = E$, which implies that every point in $E$ is periodic. After iteration, we may assume that they are all fixed points. Then we can extend $f$ to $\bar{X}\setminus \{ \text{one point}\} \cong \mathbb{A}^1$, and the result follows from Proposition \ref{mainprop}.
\endproof

\proof[Proof of Corollary \ref{cor2}]
    Let $C\subset X\times Y$ be a curve, $(x,y) \in (X\times Y)(\overline{\mathbb{Q}})$ be a point, and $p_1: X \times Y \to X$, $p_2: X \times Y \to Y$ be projections. In order to  verify the DML(1) property, we may assume that both $x$ and $y$ are not preperiodic. Suppose $\# O_{f\times g}((x,y))\cap C(\overline{\mathbb{Q}}) = \infty$.  Let $C_1:=p_1(C)$ and $C_2:=p_2(C)$. Then $C_1\subset X$ is a curve, and $\# O_f(x)\cap C_1(\overline{\mathbb{Q}}) = \infty$. By the DML(1) property for $f$, there are positive integers $n_0$ and $m$ such that the infinite sequence $\{f^{n_0+km}(x)|\ k\in \mathbb{N}\} \subset C_1(\overline{\mathbb{Q}})$. Hence $f^m(C_1)=C_1$. Similarly, $C_2$ is a periodic curve for $g$.

    After iteration, we may assume that $C_1$ is invariant for $f$ and $C_2$ is invariant for $g$, i.e. $f(C_1)=C_1$ and $g(C_2)=C_2$. Then it suffices to verify the DML conjecture for $f|_{C_1} \times g|_{C_2}$ and $C\subset C_1\times C_2$, which follows from Theorem \ref{mainthm}.  
\endproof

\bibliographystyle{alpha}
\bibliography{reference}

@unpublished{XY,
    title={Height arguments toward the dynamical {M}ordell--{L}ang problem in arbitrary characteristic},
    author={J. Xie and S. Yang},
    note={arXiv:2504.01563v2}
}

@book{BGT16,
    title={The Dynamical Mordell--Lang Conjecture},
    author={J. P. Bell and D. Ghioca and T. J. Tucker},
    publisher={American Mathematical Society},
    year={2016},
    series={Mathematics Surveys and Monographs},
    volume={\textbf{210}},
    address={Providence, RI}
}

@article{Bel06,
    title={A generalized {S}kolem--{M}ahler--{L}ech theorem for affine varieties},
    author={J. P. Bell},
    journal={J. London Math. Soc. (2)},
    year={2006},
    volume={\textbf{73}},
    number={2},
    pages={367--379}
}

@article{BGT10,
    title={The dynamical {M}ordell--{L}ang problem for \'etale maps},
    author={J. P. Bell and D. Ghioca and T. J. Tucker},
    journal={Amer. J. Math.},
    volume={\textbf{132}},
    number={6},
    year={2010},
    pages={1655--1675}
}

@article{Xie17,
    title={The dynamical {M}ordell--{L}ang conjecture for polynomial endomorphisms of the affine plane},
    author={J. Xie},
    journal={Ast\'erisque},
    year={2017},
    volume={\textbf{394}},
    pages={vi+110}
}

@unpublished{Xie,
    title={Around the dynamical {M}ordell--{L}ang conjecture},
    author={J. Xie},
    note={Text available at http://scholar.pku.edu.cn/sites/default/files/xiejunyi/files/arounddml20230701fu\_ben\_.pdf}
}

@article{BZ23,
    title={$p$-adic interpolation of orbits under rational maps},
    author={J. P. Bell and X. Zhong},
    journal={Proc. Amer. Math. Soc.},
    volume={\textbf{151}},
    number={11},
    pages={4661--4672},
    year={2023}
}

@article{Mat25,
    title={Existence of arithmetic degrees for generic orbits and dynamical {L}ang--{S}iegel problem},
    author={Y. Matsuzawa},
    journal={J. Reine Angew. Math.},
    volume={\textbf{2025}},
    number={825},
    pages={305--335},
    year={2025}
}

@article{GT09,
    title = {Periodic points, linearizing maps, and the dynamical {M}ordell--{L}ang problem},
    author = {D. Ghioca and T.J. Tucker},
    journal = {J. Number Theory},
    volume = {\textbf{129}},
    number = {6},
    pages = {1392--1403},
    year = {2009}
}

@article{Sil93,
    title={Integer points, {D}iophantine approximation, and iteration of rational maps},
    author={J. H. Silverman},
    journal={Duke Math. J.},
    volume={\textbf{71}},
    number={3},
    pages={793--829},
    year={1993}
}

@article{Mat23,
    title={Growth of local height functions along orbits of self-morphisms on projective varieties},
    author={Y. Matsuzawa},
    journal={Int. Math. Res. Not. IMRN},
    number={4},
    pages={3533--3575},
    year={2023}
}

\end{spacing}
\end{document}